\newfont{\bbb}{msbm10 scaled\magstep1}
\newfont{\bbbsub}{msbm10}
\newcommand{\ba}{\begin{array}}
\newcommand{\ea}{\end{array}}
\newcommand{\be}{\begin{equation}}
\newcommand{\ee}{\end{equation}}
\newcommand{\ds}{\displaystyle}
\newtheorem{theorem}{Theorem}
\newtheorem{lemma}[theorem]{Lemma}
\newtheorem{rem}[theorem]{Remark}
\let\oldrem\rem
\renewcommand{\rem}{\oldrem\normalfont}
\newtheorem{definition}[theorem]{Definition}
\let\olddefinition\definition
\renewcommand{\definition}{\olddefinition\normalfont}
\newcommand{\fd}{\partial}
\newcommand{\eps}{\epsilon}
\newcommand{\Del}{\Delta}
\newcommand{\norm}[1]{\left|#1\right|}
\newcommand{\dnorm}[1]{\left\|#1\right\|}
\newcommand{\ra}{\rightarrow}
\newcommand{\Z}{\mbox{\bbb Z}}
\newcommand{\R}{\mbox{\bbb R}}
\newcommand{\cI}{{\cal I}}
\newcommand{\tx}[1]{\quad\mbox{ #1 }\quad}
\title{A model for studying double exponential growth in the two-dimensional Euler equations}
\author{ Nets Hawk Katz and Andrew Tapay 
   \thanks{Both authors were partially supported by NSF grant DMS 1266104} }
\begin{document}

\maketitle

\begin{abstract}

We introduce a model for the two-dimensional Euler equations which is designed to study whether or not double exponential growth can be achieved at an interior point of the flow.

\end{abstract}

\section{Introduction}

The two-dimensional Euler equations for incompressible fluid flow are given by
$${\partial u \over \partial t} +u \cdot \nabla u  = - \nabla p,$$
together with
$$\nabla \cdot u = 0.$$

Here $u(x,t)$ is a time-varying vector field on ${\Bbb R}^2$ representing the velocity and $p(x,t)$ is a scalar representing the pressure.

The equation is solved with a given initial divergence free velocity field $u_0$:

$$u(x,0)=u_0(x).$$ 

When $u_0$ is chosen to be, for instance, smooth with compact support, a smooth solution to the
Euler equation exists for all time. Moreover, a result of Beale, Kato, and Majda \cite{BKM} shows that
Sobolev norms grow at most double exponentially in time.

Considerable work has been done recently to establish that such growth actually occurs. Denisov \cite{D} demonstrates growth similar to double exponential in an example that consists of a slightly smoothed singuar steady state solution together with a bump. For some time, the singular solution stretches the bump at a double exponential rate. Kiselev and Sverak \cite{KS} do Denisov one better by creating a sustained double exponential growth near a boundary. This is a very similar idea to Denisov's. We may imagine that something quite similar to Denisov's singular steady state lives right at the boundary and is drawing bumps towards it.

The purpose of this paper is to create a tool for studying the question of whether double exponential growth can begin spontaneously at an interior point. We borrow from Pavlovi\'{c}'s thesis \cite{P} the idea that the allowed fast growth in Euler is coming
from low frequency to high frequency interactions. We model the impact of each scale on the vicinity of a given particle as a linear area-preserving map. As each scale evolves, it impacts the effects of the smaller scales. We can model this as a system of differential equations with one $SL(2)$-valued  unknown for each scale. The  main result in the paper is Theorem \ref{main}
below. It says that during a time period, when the sum over scales of the $L^{\infty}$ norm of $\nabla u$ at a single scale is around $N$,
this autonomous system of differential equations closely approximates the actual behavior of the Euler equation for a time period
whose length is of order $O({\log N \over N})$. This is a time period during which growth by a factor of a power of $N$ can occur in
the Sobolev norms of the velocity. Indeed such growth must occur during some such time period for double exponential growth
to take place. Thus our simplified model can be used to study the possibility and likelihood of growth occuring spontaneously
at an interior point. We believe this phenomenon is definitely worthy of more study.

\section{Outline}    

Some notation: let $\psi:\R^2 \ra \R$ be a smooth function such that

$$ \psi(\xi) = \left\{ \begin{array}{ccc}
1  &\text{for} &0<\norm{\xi}<1\\
0  &\text{for} &\norm{\xi}>2
\end{array}
\right.$$

and define the operator $P_0$ to be the Fourier multiplier with symbol $\psi$. Let $\psi_1(\xi) = \psi(\xi/2) - \psi (\xi)$ and for $j>0$, define $P_j$ to be the Fourier multiplier with symbol $\psi_j(\xi):=\psi_1(2^{1-j}\xi)$. For convenience of notation, define $P_j = 0$ for $j<0$. Thus $P_j$ acts like a projection onto the frequency annulus 
$\{\xi:\norm{\xi} \sim 2^j\}$, and $\sum_j P_j$ is the identity because the sum telescopes. These $P_j$ are commonly known as the Littlewood-Paley Operators.
Further, let $\widetilde{P_j} = \sum_{\alpha = -2}^2 P_{j+\alpha}$ and $E_j = \sum_{k<j}P_k$. Note that

$$E_j f(x) = \int f(x+2^{-j}s)\widehat{\psi}(s)ds$$

and $\widehat{\psi}$ is a radial Schwartz function such that $\int\widehat{\psi}=\psi (0)=1$. Hence $E_j$ acts like, and will be referred to as, an averaging operator on scale $\sim 2^{-j}$.

Let $u: \R^2 \times \R \ra \R^2$ be the velocity field of a two-dimensional, 
inviscid, incompressible fluid flow and $\omega = \frac{\fd u_2}{\fd x_1} - \frac{\fd u_1}{\fd x_2}$ the associated 
vorticity. We make some assumptions about $u$ over the time period we will be considering. We will assume that

\begin{equation} \label{state} \sum_{j=0}^{\infty} ||P_j \nabla u||_{L^{\infty}} \lesssim N, \end{equation}

and

\begin{equation} \label{conservation} ||P_j \nabla u||_{L^{\infty}} \lesssim 1 \end{equation}

for all $j\geq 0$. We define the flow maps $\phi(x,t)$ to be solutions of the differential equations
 
\begin{equation} \label{phi}
\begin{aligned}
\frac{\partial}{\partial t} \phi(x,t) &= u(\phi(x,t),t)\\
\phi(x,0) 
&= x
\end{aligned}
\end{equation}

and so the point $\phi(x,t)$ is the image of the point $x$ under the flow with velocity field $u$ at time $t$. 
Thus, the Jacobian matrix of $\phi$, which we denote by $D\phi$, satisfies the differential equation

\begin{equation} \label{Dphi}
\begin{aligned}
\frac{\partial}{\partial t} D\phi(x,t) 
&= ((\nabla u) \circ \phi) (x,t) \cdot D\phi(x,t)\\
D\phi(x,0) &= I
\end{aligned}
\end{equation}

for each $x \in \R^2$. By both $D$ and $\nabla$, we denote the Jacobian derivative in the spatial variable $x$, and not in the coordinates of the particle trajectories $\phi(x,t)$. Indeed, it should be noted that the equations (\ref{phi}) and (\ref{Dphi}) invite a change of coordinates via the map $x \mapsto \phi(x,t)$. This change of coordinates is especially convenient because incompressibility, $\nabla_x \cdot u = 0$, gives $\det (D_x \phi(x,t)) \equiv 1$. This will  make it useful for our purposes to use the Lagrangian reference frame, that is, spatial variables will be evaluated along the particle trajectories $\phi(x,t)$. A thorough discussion of particle trajectory maps and the Lagrangian reference frame can be found in \cite{BM}.\\

Proceeding formally, if we define $R_i := \Del^{-\frac{1}{2}}\frac{\fd}{\fd x_i}$, we have the so-called 
Biot-Savart Law:

$$\nabla u = \left(\ba{cc}
 -R_1R_2 \omega & -R_2^2 \omega \\
 R_1^2 \omega & R_1R_2 \omega
 \ea\right)$$

Using the the Green's function for the Laplace operator, we can calculate the nonlocal parts of the composed
Riesz operators by giving the non-singular part of their kernels. (The local part, of course, lives
in the singular part of the kernel located on the diagonal.)

$$R_1R_2\omega = K_{12}*\omega(\cdot,t), \quad R_1^2\omega = K_{11}*\omega(\cdot,t), 
\tx{and} R_2^2=-K_{11}*\omega(\cdot,t)$$
 
where

$$K_{12}(x_1, x_2) =\frac{x_1x_2}{\pi(x_1^2+x_2^2)^2} \tx{and} K_{11}(x_1, x_2) = 
\frac{(x_2^2-x_1^2)}{2\pi(x_1^2+x_2^2)^2}.$$

The following definition is the one of the main fixtures of this paper. We will define an approximation of $\nabla u (\phi(0,t),t)$ so that, for a short time, the flow is given by a linear map at each physical scale around the point $\phi(0,t)$. That is, the contribution to $\nabla u(\phi(0,t),t)$ from the part of the vorticity which at time 0 was at an annulus at scale $2^{j}$ around 0 is calculated as though
 the flow on the annulus was  linear given by some $h \in SL(2)$. This is inspired by the following version of the Biot-Savart Law:

\be\label{BS}\begin{split}\nabla u (\phi(0,t),t) &= \int \omega (s,t) K(s - \phi(0,t))ds\\
& = \int \omega (\phi(s,t),t) K(\phi(s,t) - \phi(0,t))ds\\
&= \sum_{j\in \Z} \int_{A_j} \omega (\phi(s,t),t) K(\phi(s,t) - \phi(0,t))ds
\end{split}\ee

where $A_j = \{x:2^{-j}\leq\norm{x}<2^{1-j}\}$ and by dropping the index of $K$ we mean a generic entry in the matrix $\nabla u (\phi(x,t),t)$. We have used the aforementioned change of coordinates $s \mapsto \phi(s,t)$. Here we are focusing on $\phi(0,t)$, which we think of as a generic interior point of a fluid flow in order to study whether double exponential growth is in the making at that point as it moves along the flow. 

%Definition

\begin{definition}
 Let $h(t)$ be an element of $SL(2)$.
 
$$(\nabla u)_{j,h} (t)= \left(\ba{cc}
-(\nabla u)_{j,h,2} & (\nabla u)_{j,h,1} \\
(\nabla u)_{j,h,1} & 
(\nabla u)_{j,h,2} \\
\ea\right)$$

where

$$(\nabla u)_{j,h,i} (t) = \int_{A_j} \omega_{0,j}(s) K_{1i}(h(t) \cdot s)ds,$$

$$\omega_j = \chi_{A_j}(E_{j+\log N}-E_{j-\log N}) \omega,$$

and

$$\omega_{0,j}(x) = \omega_j(x,0).$$

\end{definition}

\begin{rem}The purpose of using $\omega_{0,j}$ instead of just $\omega_0$ is a technical advantage: $\omega_{0,j}$ is a projection onto the frequencies of $\omega$ that make a significant contribution to $\nabla u(\phi(0,t),t)$ coming from the annulus $A_j$. Indeed, if we define

$$\widetilde{\nabla u} (\phi(0,t),t) := \sum_{j\in \Z} \int_{A_j} \omega_j (\phi(s,t),t) K(\phi(s,t) - \phi(0,t))ds $$

whereas (at least formally)

$$\nabla u (\phi(0,t),t) = \sum_{j\in \Z} \int_{A_j} \omega (\phi(s,t),t) K(\phi(s,t) - \phi(0,t))ds, $$

the difference is

\begin{align*}\sum_j \sum_{\norm{k-j}>\log N} \int_{A_j} &P_k( \omega(\phi(s,t),t))K(\phi(s,t) - \phi(0,t))ds\\
&= \sum_{\{(j,k) : \norm{k-j}>\log N\}} \int_{A_j} \left(\int \omega(y)\check{\psi_k}(y-s)dy\right)K(\phi(s,t) - \phi(0,t))ds\\
& = \sum_{\{(j,k) : \norm{k-j}>\log N\}} \int_{A_j}  \left(\int \omega(y)\left(\int e^{2\pi i (y-s)\cdot \xi}\psi_k(\xi)d\xi\right) dy\right)K(\phi(s,t) - \phi(0,t))ds.
\end{align*}

We integrate by parts in $\int e^{2\pi i (y-s)\cdot \xi}\psi_k(\xi)d\xi$, moving a derivative $\nabla_\xi$ from the exponential onto $\psi_k$ for terms in which $k>j+\log N$, and the opposite way for terms where $k<j+\log N$. Since $\norm{s}\sim 2^{-j}$ and we obtain a factor of $2^{\pm k}$ from the dilation of $\psi_1$, this gives the estimate

$$ \lesssim \sum_{\{(j,k) : \norm{k-j}>\log N\}} 2^{-\norm{k-j}} \dnorm{P_k \omega}_{L^\infty} \int_{A_j}K(\phi(s,t)-\phi(0,t))ds.$$

We will only be considering a time period of order $(\log N)/N$. This, together with (\ref{phi}), the Fundamental Theorem of Calculus and (\ref{state}), gives us $\norm{\phi(s,t)-\phi(0,t)} \gtrsim \norm{s}/\log N$. Since, by definition, $\norm{K(x)}\sim \norm{x}^{-2}$, we have $\int_{A_j}K(\phi(s,t)-\phi(0,t))ds \lesssim (\log N)^2$ for all $j$. Hence, we now have the bound

   \begin{align} 
&  \lesssim (\log N)^2 \sum_{\{(j,k) : \norm{k-j}>\log N\}} \dnorm{P_k \omega}_{L^\infty} 2^{-\norm{k-j}}\notag \\
& \lesssim  \frac{(\log N)^2}{N} \sum_{k} \dnorm{P_k \omega}_{L^\infty} \notag \\
& \lesssim (\log N)^2. \label{above}
\end{align} 

We also used (\ref{state}) and the fact that $\dnorm{P_k \omega}_{L^\infty} \sim \dnorm{P_k \nabla u}_{L^\infty} $. The technical advantage of using $\omega_{0,j}$ is that we have, similarly to (\ref{state}),

\be\label{state2} \sum_{j=0}^\infty \dnorm{\omega_j}_{L^\infty} \lesssim \sum_{j=0}^\infty \sum_{\norm{k-j}<\log N} \dnorm{P_k \omega}_{L^\infty} \lesssim N \log N.\ee

The reader might ask why we chose to have the error estimate in (\ref{above}) come to $(\log N)^2$. It is entirely arbitrary. By replacing
the range of $\log N$  scales by a range of $C \log N$ scales which would only cost us a constant in the estimate (\ref{state2}),
we could reduce the estimate to an arbitrary negative power of $N$, but the point is that because of the brevity of our time
period, any estimate for the error which has a power of $N$ lower than 1 will work. The error is smaller than the worst
case we have for $||\nabla u||_{L^\infty}$.
The important part of these estimates is that we lose (at most) a factor of a power of $\log N$ in (\ref{state2}), which is enough for our purposes, mainly because of the assumption (\ref{state}).
\end{rem}

We now state the main result: for a short time, we can approximate the average of the Jacobian of the flow map at the scale $2^{-j}$ by a linear map for each $j$ and these linear maps satisfy an autonomous system of differential equations not depending on the
solution to the Euler equations. The behavior of this system can be a test for whether double exponential growth can occur and
what it should look like.\\

%Theorem main, the main theorem

\begin{theorem} \label{main}
Let $h_j \in SL(2)$ be defined as the solution to 
the ODE

\begin{align*}
\frac{dh_j}{dt} &= \left( \sum_{k<j} (\nabla u)_{k,h_k} \right)h_j\\
h_j(0) &= I
\end{align*}

Then there is a universal constant $C$ such that, for times $t \leq C(\log N)/N$, we have

$$\norm{h_j - E_j D\phi} = O(N^{-\frac{7}{10}})$$

for all $j>0$.
\end{theorem}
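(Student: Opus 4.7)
The plan is to derive an ODE for $E_j D\phi(0,t)$ from (\ref{Dphi}), show it agrees with the ODE defining $h_j$ up to a small error, and apply a Gronwall argument on the window $t\le C(\log N)/N$ for a sufficiently small constant $C$. Differentiating $E_j D\phi(x,t)=\int D\phi(x+2^{-j}s,t)\widehat{\psi}(s)\,ds$ in $t$ and using (\ref{Dphi}) gives
$$\frac{d}{dt}E_j D\phi(0,t)=E_j\bigl[((\nabla u)\circ\phi)\cdot D\phi\bigr](0,t),$$
so the task is to show the right-hand side is approximately $\bigl(\sum_{k<j}(\nabla u)_{k,h_k}\bigr)E_j D\phi(0,t)$.

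The central step is a scale-by-scale analysis of $(\nabla u)\circ\phi$ via the Biot--Savart expansion (\ref{BS}). For scales $k\ge j$ (smaller than the $2^{-j}$ averaging window of $E_j$), the $E_j$ averaging together with the Remark's truncation of $\omega$ to $\omega_{0,k}$ produces a contribution negligible on our time window. For scales $k<j$, the kernel $K(\phi(s,t)-\phi(x,t))$ varies only on scales $\gtrsim 2^{-k}>2^{-j}$, so $E_j$ essentially factorizes the product, pulling out $E_j D\phi(0,t)$; within each remaining Biot--Savart integral, the linear-flow approximation $\phi(s,t)-\phi(0,t)\approx h_k(t)\cdot s$ on $A_k$ reduces the term to $(\nabla u)_{k,h_k}$. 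The costs of these approximations are controlled using (\ref{state}), (\ref{conservation}), the Remark, and the short-time bound $\dnorm{\phi(s,t)-\phi(0,t)}\gtrsim\norm{s}/\log N$ coming from (\ref{phi}) and (\ref{state}).

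Setting $e_j(t)=h_j(t)-E_j D\phi(0,t)$, this analysis yields a coupled system of the form
$$\frac{de_j}{dt}=\Bigl(\sum_{k<j}(\nabla u)_{k,h_k}\Bigr)e_j+\sum_{k<j}\bigl[(\nabla u)_{k,h_k}-(\nabla u)_{k,E_k D\phi}\bigr]E_j D\phi(0,t)+\mathrm{Err}_j(t),$$
where the matrix multiplying $e_j$ has norm $\lesssim N$ by (\ref{state}), the cross-coupling depends linearly on $\{e_k\}_{k<j}$ via Lipschitz-in-$h$ bounds on $h\mapsto(\nabla u)_{k,h}$, and $\mathrm{Err}_j$ absorbs the approximation errors, each only a small power of $N$ per unit time. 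Running Gronwall simultaneously over $j$ on the window $t\le C(\log N)/N$ inflates $\sup_j\dnorm{e_j}$ by at most a power of $N$ that, by the choice of $C$, remains below the gain from the per-unit-time errors; balancing these exponents against the $(\log N)^{O(1)}$ losses from the Remark and (\ref{state2}) gives the stated $O(N^{-7/10})$ bound, the exponent $7/10$ being a convenient value comfortably below the breaking point of this balance.

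The chief obstacle is controlling the cross-coupling term. The Lipschitz constant of $h\mapsto(\nabla u)_{k,h}$ a priori depends on $\dnorm{h}$ and $\dnorm{h^{-1}}$ since $K$ has a $|x|^{-2}$ singularity, and for $h\in SL(2)$ both of these norms may grow during the time window. The proof will need a careful estimate of this Lipschitz constant (using Calder\'on--Zygmund cancellation in $K$ and the explicit structure of $K_{11}$ and $K_{12}$, giving at worst a mild polynomial dependence on $\dnorm{h}$), combined with a careful bookkeeping of the accumulated Gronwall inflation. A secondary issue is verifying that the $k\ge j$ contributions sum in a controlled fashion after the $E_j$ averaging; this uses the dyadic structure of the expansion along with the estimate from the Remark.
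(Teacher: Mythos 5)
Your proposal follows essentially the same route as the paper: compare the ODE for $E_j D\phi(0,t)$ with the defining ODE for $h_j$, split the Biot--Savart sum into a negligible $k\ge j$ part and a $k<j$ part linearized via the approximation $\phi(s,t)-\phi(0,t)\approx E_kD\phi(0,t)\cdot s$ (Lemma \ref{diff}), and close a Gronwall/bootstrap argument over all scales simultaneously (the paper phrases this as a first-time-and-scale-of-failure argument). The ``chief obstacle'' you flag is resolved exactly as you anticipate: the paper bounds the kernel difference using two-sided bounds $2^{-k}N^{-\eps}\lesssim\norm{x},\norm{y}\lesssim 2^{-k}N^{\eps}$ from (\ref{Dphismall}) and (\ref{phismall}), yielding the mild polynomial loss you predict.
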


Many of the estimates will be based on the following Lemma, which says that solutions to similar ODEs remain similar for a short time.\\

%Lemma ODE, basic ODE lemma

\begin{lemma} \label{ODE}
Suppose that $F, G_1, G_2, w$ and $v$ are functions of time such that $F(t) = O(N)$, and that 

\begin{align*}
\frac{dw}{dt}(t) &= F(t)w(t) + G_1(t)\\
\frac{dv}{dt}(t) &= F(t)v(t) + G_2(t)\\
w(0) &= v(0).
\end{align*}

Assume further that, for some constant $E$,

$$\norm{G_1(t)-G_2(t)} \lesssim \left\{ \begin{array}{ccc}
E  &\text{for} &0<t\lesssim1/N\\
\norm{F(t)(w(t)-v(t))}  &\text{for} &t\gtrsim1/N
\end{array}
\right.$$

Then, there is a universal constant $C$, independent of $N$, such that $\norm{(w-v)(t)} \lesssim EN^{-\frac{9}{10}-\frac{1}{100}}$ for all times $t \leq C(\log N)/N$.
\end{lemma}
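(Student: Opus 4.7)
The natural move is to set $\delta(t) := w(t) - v(t)$, so that $\delta(0) = 0$ and
$$\frac{d\delta}{dt} = F(t)\delta(t) + (G_1(t) - G_2(t)).$$
Since the hypothesis bounds $|G_1-G_2|$ qualitatively differently in the two regimes $t \lesssim 1/N$ and $t \gtrsim 1/N$, the plan is to apply Gr\"onwall to the scalar differential inequality $|\delta'| \leq |F||\delta| + |G_1-G_2|$ separately on each regime and then paste the bounds together.

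In the first phase, for $0 \leq t \leq t_1$ with $t_1 \asymp 1/N$, the forcing satisfies $|G_1-G_2| \lesssim E$, so Duhamel applied to the scalar inequality yields
$$|\delta(t)| \leq \int_0^t |G_1-G_2|(s)\exp\!\left(\int_s^t |F(\tau)|\,d\tau\right) ds \lesssim E\cdot t\cdot e^{O(Nt)}.$$
Because $Nt = O(1)$ throughout this phase, the exponential factor is bounded by a universal constant, and we obtain $|\delta(t_1)| \lesssim E/N$.

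In the second phase, for $t_1 \leq t \leq C(\log N)/N$, the hypothesis upgrades to $|G_1-G_2| \lesssim |F(t)\delta(t)|$, so the differential inequality reduces to $|\delta'| \leq AN|\delta|$ for some constant $A$ determined only by the implicit constants in $|F|\lesssim N$ and $|G_1-G_2|\lesssim |F\delta|$. Gr\"onwall then gives
$$|\delta(t)| \leq |\delta(t_1)|\exp\!\big(AN(t-t_1)\big) \lesssim (E/N)\cdot N^{AC} = E\cdot N^{AC-1}.$$
It suffices to take $C$ small enough that $AC \leq 9/100$, in which case $|\delta(t)| \lesssim E\cdot N^{-91/100} = EN^{-9/10-1/100}$ on the whole window.

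The only real obstacle is making sure the universal constant $C$ can be chosen independently of $N$, which is clear because $A$ depends only on the implicit constants in the hypotheses. The conceptual cost is that we must shrink $C$ to tame the exponential amplification in Phase 2; this is precisely why the time horizon over which Theorem \ref{main} is asserted is $O((\log N)/N)$ rather than longer, and it explains the mildly suboptimal exponent $-9/10 - 1/100$ in the conclusion (any exponent strictly between $-1$ and $0$ can be obtained by adjusting $C$ analogously).
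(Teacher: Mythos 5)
Your proposal is correct and follows essentially the same two-phase argument as the paper: linear growth of the forcing term gives $|w-v| \lesssim E/N$ by time $\sim 1/N$, after which Gr\"onwall with $|F| = O(N)$ over a window of length $C(\log N)/N$ costs a factor $N^{O(C)}$, tamed by choosing $C$ small. The only cosmetic difference is that you phrase Phase 1 via Duhamel where the paper tracks the first time $|w-v|$ reaches $E/N$; the content is identical.
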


The idea behind Lemma \ref{ODE} is that, since the difference starts out at 0, the ``error" term $G_1-G_2$ dominates for times $t \lesssim 1/N$. At that time, the main term $F(t)(w(t)-v(t))$ becomes the dominant term but $\norm{(w-v)(t)}$ remains relatively small for an additional time $\lesssim \log N$. Most of the time, we won't need the extra factor of $N^{-\frac{1}{100}}$. It will be used to eliminate factors of $\log N$ that show up in the error term $E$.\\

A straightforward but necessary application of Lemma \ref{ODE} is\\

%Lemma PjDphi

\begin{lemma} \label{PjDphi}

Under the assumptions (\ref{state}) and (\ref{conservation}), $\sup_j \dnorm{P_j D\phi(t)}_{L^\infty} \lesssim N^{-\frac{9}{10}-\frac{1}{100}}$ for times $t \lesssim C(\log N)/N$.

\end{lemma}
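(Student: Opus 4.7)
The plan is to apply Lemma~\ref{ODE} pointwise in $x \in \R^2$ for each $j \geq 1$. Fixing such $x$ and $j$, I set $w(t) = P_j D\phi(x,t)$ and $v(t) \equiv 0$. Since $D\phi(\cdot,0) = I$ is spatially constant, $P_j I = 0$, so $w(0) = v(0) = 0$. Applying $P_j$ in the spatial variable to (\ref{Dphi}) and invoking the commutator identity $P_j(fg) = f\, P_j g + [P_j, f]\, g$ gives
\[
\frac{dw}{dt} \;=\; \nabla u(\phi(x,t),t) \cdot P_j D\phi(x,t) \;+\; \bigl[P_j,\, (\nabla u)\circ\phi(\cdot,t)\bigr]\, D\phi(\cdot,t)(x),
\]
which matches the template of Lemma~\ref{ODE} with $F(t) = \nabla u(\phi(x,t),t)$, $G_2 \equiv 0$, and $G_1(t)$ equal to the commutator term. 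The bound $F = O(N)$ follows immediately from (\ref{state}).

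All that remains is to verify the two bounds on $G_1$ required by Lemma~\ref{ODE}. For the short-time bound, observe that at $t=0$ the commutator collapses to $[P_j, \nabla u_0]\, I = P_j \nabla u_0$, whose $L^\infty$ norm is $\lesssim 1$ by (\ref{conservation}). For $0 < t \lesssim 1/N$, both $D\phi(\cdot,t)$ and $(\nabla u)\circ\phi(\cdot,t)$ are $O(1)$-perturbations of their values at $t=0$, since the Gronwall estimate gives $\|D\phi(t)\|_{L^\infty} \leq e^{Nt} \lesssim 1$ on this interval; a continuity argument together with the standard commutator estimate then yields $\|G_1(t)\|_{L^\infty} \lesssim E$ with a constant $E$ of size at most a power of $\log N$.

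The main obstacle is the long-time bound: for $t \gtrsim 1/N$ I must show $\|G_1(t)\| \lesssim \|F(t)\, w(t)\|$, so that the commutator is dominated by the natural linear term once $w$ has outgrown its initial transient. The approach is to decompose $\nabla u = \sum_k P_k \nabla u$ and to control each piece $[P_j,\, (P_k \nabla u)\circ\phi]\, D\phi$ separately. For $k$ far from $j$, the Lipschitz-type commutator bound $\|[P_j, f] g\|_{L^\infty} \lesssim 2^{-j}\|\nabla f\|_{L^\infty}\|g\|_{L^\infty}$ applies; summing over such $k$ via (\ref{state}), together with the Gronwall estimate $\|D\phi(t)\|_{L^\infty} \lesssim N^{O(1)}$ on the time window $t \lesssim \log N/N$, keeps this contribution acceptable. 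For $k$ near $j$, a Bony paraproduct decomposition isolates a dominant term of the form $(E_{j-3} D\phi)\cdot P_j\bigl((\nabla u)\circ\phi\bigr)$ which can be reorganized so as to expose the factor $P_j D\phi = w$, matching $\|Fw\|$ up to remainders bounded using (\ref{state}) and (\ref{conservation}). Once both bounds are verified, Lemma~\ref{ODE} delivers $\|P_j D\phi(x,t)\| \lesssim E \cdot N^{-9/10 - 1/100}$ uniformly in $x$ and $j \geq 1$, establishing the lemma.
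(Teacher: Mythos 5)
Your setup---apply $P_j$ to (\ref{Dphi}), isolate a linear term of size $O(N)$ times the unknown, treat the remainder as forcing, and invoke Lemma \ref{ODE}---is the paper's strategy, and your band-by-band treatment of $\nabla u$ with $k$ near and far from $j$ is in effect the paper's paraproduct trichotomy. The proof breaks down, though, precisely at the step you flag as the main obstacle. The near-diagonal piece of the commutator, schematically $P_j\big(\widetilde{P_j}((\nabla u)\circ\phi)\cdot E_jD\phi\big)$, carries its frequency $\sim 2^j$ entirely on the $\nabla u$ factor; it contains no occurrence of $P_jD\phi$ and cannot be ``reorganized so as to expose the factor $P_jD\phi=w$.'' It must be left as forcing, and the correct bound for it---the one the paper uses---is $O(\dnorm{D\phi}_{L^\infty})$, coming from (\ref{conservation}) (a single band of $\nabla u$ has $L^\infty$ norm $O(1)$). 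This term is the entire point of the lemma: the forcing is $O(1)$ for $t\lesssim 1/N$ while the homogeneous rate is $O(N)$, so $P_jD\phi$ creeps up only to $\sim 1/N$ before exponential growth takes over, yielding $N^{-9/10}$ after time $(\log N)/N$. If that term really were absorbable into $F(t)w(t)$, Gronwall from $w(0)=0$ would force the absurd conclusion $P_jD\phi\equiv 0$.

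Once the term is recognized as forcing, your pointwise-in-$x$ framework cannot verify the long-time hypothesis of Lemma \ref{ODE}: with $v\equiv 0$ it demands $\norm{G_1(t)}\lesssim\norm{F(t)\,P_jD\phi(x,t)}$ at the fixed point $x$, which fails wherever $P_jD\phi(x,t)$ happens to be zero or very small while the forcing is not. The paper avoids this by running the comparison on the scalar $S(t)=\sup_j\dnorm{P_jD\phi(t)}_{L^\infty}$, obtaining $\frac{d}{dt}S=O(N)S+O(\dnorm{D\phi}_{L^\infty})$ and taking $v\equiv S(0)$, so that the domination of the forcing by $O(N)S(t)$ for $t\gtrsim 1/N$ is a statement about sup-norms tied to the stopping-time argument inside Lemma \ref{ODE}. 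Two smaller points: your short-time commutator bound via $\dnorm{\nabla((\nabla u)\circ\phi)}_{L^\infty}$ is not available, since (\ref{state}) and (\ref{conservation}) give no control of $\dnorm{\nabla^2u}_{L^\infty}$ (the sum $\sum_k2^k\dnorm{P_k\nabla u}_{L^\infty}$ diverges), so even the short-time estimate must go through the band decomposition; and the constant $E$ needs to come out $O(1)$ rather than a power of $\log N$ if you want to land exactly on the exponent $-\tfrac{9}{10}-\tfrac{1}{100}$.
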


In order to prove Theorem \ref{main}, we will show that, for the time period we are considering, the flow maps are essentially linear on a given dyadic annulus. That is, we will estimate the difference between the linear map $E_j D\phi(0,t) \cdot x$ and the difference $\phi(x,t) - \phi(0,t)$ for $\norm{x} \sim 2^{-j}$. To do so, we first show that the averages of the Jacobians of the flow maps are close to the averages of the differences in the flow maps, that is

$$E_j D\phi (0,t) \cdot x - \big( E_j \phi (x,t) - E_j \phi(0,t) \big) \lesssim 2^{-j} N^{-\frac{9}{10}},$$

a sort of approximate Mean Value Theorem. We do this by using (\ref{phi}) to examine the time derivative of the difference of the flow maps, and (\ref{Dphi}) to examine the average of $D\phi$ at the appropriate scale. With the Fundamental Theorem of Calculus and on frequency support grounds, we have that the time derivative of the difference is essentially 

$$\left( \int_0^1 E_j\nabla u (s\phi(x,t)+(1-s)\phi(0,t),t)ds \right) \cdot (E_j\phi(x,t) - E_j\phi(0,t)).$$

If we throw away $\log N$ many frequencies from the integrand, it is almost constant on its domain. The error from doing so is acceptable, and so we have now essentially

$$E_j \nabla u (0,t) \cdot (E_j\phi(x,t) - E_j\phi(0,t)) + O(2^{-j}\log N)$$

and we apply Lemma \ref{ODE}. We will still have to show that the difference of averages is close to the actual difference for $x$ at the appropriate scale. Since $\sum P_k = 1$, this is entirely a matter of controlling the frequencies bands bigger than $2^{j}$. We do this by first using a trivial bound for the high ($\geq j + \log N$) frequencies, which comes from the Fundamental Theorem of Calculus. For frequencies $j \leq k \leq j+\log N$, we can again exploit the fact that averages at scale $2^{-j}$ are essentially constant at scale $2^{-j - \log N}$.\\

Putting all of this together, we have\\

%Lemma diff, the averages of the Jacobians of the flow maps are close to the differences of the flow maps

\begin{lemma} \label{diff}
  For times $t \leq C(\log N)/N$ and $\norm{x} \sim 2^{-j}$, we have

$$\norm{E_j D\phi(0,t)\cdot x - (\phi(x,t) - \phi(0,t))} = O( 2^{-j} N^{-\frac{9}{10}}).$$
\end{lemma}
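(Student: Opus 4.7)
The plan is to split the quantity via the triangle inequality as
$$
E_j D\phi(0,t)\cdot x - (\phi(x,t) - \phi(0,t)) = \bigl[E_j D\phi(0,t)\cdot x - (E_j\phi(x,t) - E_j\phi(0,t))\bigr] + \bigl[(E_j\phi(x,t) - E_j\phi(0,t)) - (\phi(x,t) - \phi(0,t))\bigr],
$$
and treat the two pieces separately: the first is an ``approximate mean value theorem'' handled by Lemma \ref{ODE}, and the second is a purely high-frequency tail controlled by Lemma \ref{PjDphi}.

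For the first piece, I would set $w(t) = E_j D\phi(0,t)\cdot x$ and $v(t) = E_j\phi(x,t) - E_j\phi(0,t)$, noting $w(0)=v(0)=x$. Differentiating in time and using (\ref{phi}), (\ref{Dphi}), one has $w'(t) = E_j\bigl[(\nabla u \circ \phi)\, D\phi\bigr](0,t)\cdot x$, while by the Fundamental Theorem of Calculus,
$$
v'(t) = E_j\!\left[\int_0^1 \nabla u\bigl(s\phi(\cdot,t)+(1-s)\phi(0,t),t\bigr)ds \cdot (\phi(\cdot,t)-\phi(0,t))\right]\!(x) - (\text{same at }0).
$$
The goal is to show that, up to a tolerable error, both time derivatives equal $E_j\nabla u(\phi(0,t),t)$ applied to the corresponding function; then Lemma \ref{ODE} (with $F(t)=E_j\nabla u(\phi(0,t),t)=O(N)$ by (\ref{state}), and error $E \lesssim 2^{-j}\log N$) immediately yields $|w(t)-v(t)| \lesssim 2^{-j}N^{-9/10-1/100}\log N = O(2^{-j}N^{-9/10})$. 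The reduction to a slowly-varying driving coefficient proceeds by replacing $\nabla u$ by $E_{j+\log N}\nabla u$ (which, exactly as in the remark following the main definition, costs only a $(\log N)$-factor by (\ref{state})), after which the integrand as a function of $x$ is essentially constant on the $2^{-j}$-ball around $0$, so the operator $E_j$ factors through into the product up to acceptable commutator errors. A parallel frequency-splitting argument applied to $E_j[(\nabla u\circ\phi)D\phi](0,t)$, using Lemma \ref{PjDphi} to control the intermediate-scale behaviour of $D\phi$, produces the same main term with the same kind of error.

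For the second piece, I would write
$$
(\phi(x,t)-\phi(0,t)) - (E_j\phi(x,t)-E_j\phi(0,t)) = \sum_{k\geq j}\bigl(P_k\phi(x,t) - P_k\phi(0,t)\bigr)
$$
and split the sum at $k = j+\log N$. For $k\geq j+\log N$, I would use Bernstein together with Lemma \ref{PjDphi} to get $\|P_k\phi(\cdot,t)\|_{L^\infty} \lesssim 2^{-k}\|P_k D\phi\|_{L^\infty} \lesssim 2^{-k}N^{-9/10-1/100}$; summing geometrically gives a contribution of order $2^{-j}N^{-1-9/10-1/100}$. For $j\leq k < j+\log N$, I would apply the ordinary mean value theorem to $P_k\phi$ together with Lemma \ref{PjDphi}, obtaining $|P_k\phi(x,t)-P_k\phi(0,t)| \lesssim |x|\,\|P_k D\phi\|_{L^\infty} \lesssim 2^{-j}N^{-9/10-1/100}$ for each of the $\log N$ terms; the sum is absorbed into $O(2^{-j}N^{-9/10})$ because $(\log N)N^{-1/100}$ is bounded.

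The main obstacle is the ``approximate commutation'' step in the first piece: one must carefully verify that applying $E_j$ to the product $(\nabla u\circ\phi)\,D\phi$ (respectively, to the FTC integrand in $v'(t)$) differs from the product of $E_j\nabla u(\phi(0,t),t)$ with the corresponding averaged quantity by at most $O(2^{-j}\log N)$. This is precisely the place where the logarithmic slack in (\ref{state2}) gets used, and where the $N^{-1/100}$ buffer baked into Lemma \ref{ODE} is needed to absorb the resulting $\log N$ factors. Once this commutator estimate is in hand, the rest is a direct application of the two earlier lemmas.
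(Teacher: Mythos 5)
Your decomposition and overall strategy coincide with the paper's: the same split into an approximate mean value theorem for $E_jD\phi(0,t)\cdot x - (E_j\phi(x,t)-E_j\phi(0,t))$, handled by comparing time derivatives and invoking Lemma \ref{ODE}, plus a high-frequency tail $\sum_{k\geq j}(P_k\phi(x,t)-P_k\phi(0,t))$ handled with Lemma \ref{PjDphi}. Your treatment of the tail is fine and in fact slightly cleaner than the paper's: Bernstein plus Lemma \ref{PjDphi} for $k\geq j+\log N$, and the mean value theorem applied directly to $P_k\phi$ for the $\log N$ intermediate bands, replaces the paper's subdivision of the segment from $0$ to $x$ into $\sim N$ pieces, with the same outcome.

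The one step that fails as written is the ``slowly varying coefficient'' reduction. You propose to replace $\nabla u$ by $E_{j+\log N}\nabla u$ and assert (i) that the replacement costs a factor of $\log N$ by (\ref{state}) and (ii) that the result is essentially constant on the $2^{-j}$-ball. Both claims are wrong for that cutoff: $E_{j+\log N}\nabla u$ contains frequencies up to $2^{j}N$, so its variation over a ball of radius $2^{-j}$ is a priori as large as $2^{j+\log N}\cdot N\cdot 2^{-j}\sim N^2$, and the discarded tail $\sum_{k\geq j+\log N}P_k\nabla u$ is only $O(N)$ by (\ref{state}), not $O(\log N)$. The cutoff must go the other way: write $E_j\nabla u = E_{j-\log N}\nabla u + \sum_{l=j-\log N}^{j-1}P_l\nabla u$. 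The low piece $E_{j-\log N}\nabla u$ has Lipschitz constant $\lesssim 2^{j-\log N}\cdot N = 2^{j}$, hence varies by $O(\log N)$ over the segment from $\phi(0,t)$ to $\phi(x,t)$, whose length is $\lesssim 2^{-j}\log N$ by (\ref{phismall}), so it may be frozen at $\phi(0,t)$; the $\log N$ discarded intermediate bands are each $O(1)$ by (\ref{conservation}) --- not (\ref{state}) --- for a total error $O(\log N)$, which after multiplication by $\norm{E_j\phi(x,t)-E_j\phi(0,t)} = O(2^{-j}\log N)$ is absorbed by the $N^{-\frac{1}{100}}$ slack in Lemma \ref{ODE} exactly as you anticipate. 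With that correction your argument is the paper's.
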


Finally, we will prove Theorem \ref{main} by using Lemma \ref{diff} to substitute the linear map $E_jD\phi(0,t)\cdot x$ for the difference $\phi(x,t)-\phi(0,t)$ in each piece of the convolution used to calculate $\nabla u$ by the Biot-Savart Law.

\section{Proofs}

%Proof of Lemma ODE

We restate Lemma \ref{ODE}: Suppose that $F, G_1, G_2, w$ and $v$ are functions of time such that $F(t) = O(N)$, and that 

\begin{align*}
\frac{dw}{dt}(t) &= F(t)w(t) + G_1(t)\\
\frac{dv}{dt}(t) &= F(t)v(t) + G_2(t)\\
w(0) &= v(0).
\end{align*}

Assume further that, for some constant $E$,

$$\norm{G_1(t)-G_2(t)} \lesssim \left\{ \begin{array}{ccc}
E  &\text{for} &0<t\lesssim1/N\\
\norm{F(t)(w(t)-v(t))}  &\text{for} &t\gtrsim1/N
\end{array}
\right.$$

Then, there is a universal constant $C$, independent of $N$, such that $\norm{(w-v)(t)} \lesssim EN^{-\frac{9}{10}-\frac{1}{100}}$ for all times $t \leq C(\log N)/N$.

\begin{proof}[Proof of Lemma \ref{ODE}:] Observe that

\begin{align*}
\frac{d(w-v)}{dt}(t) &= F(t)(w-v)(t) - (G_1(t) - G_2(t)) \\
(w-v)(0) &= 0
\end{align*}

and suppose that $T$ is the first time that such that $\norm{(w-v)(T)}=E/N$. Then, for times $t \leq \min \{ \frac{1}{N},T \}$, we have by assumption

$$F(t)(w-v)(t) = O(E) \implies \norm{\frac{d(w-v)}{dt}(t)} \lesssim E.$$

Therefore, since the growth of the difference is at most linear of rate $E$, it follows that $T = O(\frac{1}{N})$. For $T \leq t \leq C(\log N)/N$, we have

$$\norm{\frac{d(w-v)}{dt}(t)} \lesssim \norm{F(t)(w-v)(t)} = O(N)\norm{(w-v)(t)}.$$

and so by Gronwall's lemma we have

$$\norm{(w-v)(t)} \lesssim \frac{E}{N}e^{Nt} \lesssim EN^{-\frac{9}{10}-\frac{1}{100}}$$

where we get the last inequality by choosing $C$ such that $t\lesssim C(\log N)/N \leq (\log (N^{\frac{1}{10} - \frac{1}{100}}))/N$. 

\end{proof}

Recall Lemma \ref{PjDphi}: Under the assumptions (\ref{state}) and (\ref{conservation}), $\sup_j \dnorm{P_j D\phi(t)}_{L^\infty} \lesssim N^{-\frac{9}{10}-\frac{1}{100}}$ for times $t \lesssim C(\log N)/N$.

%Proof of Lemma PjDphi

\begin{proof}[Proof of Lemma \ref{PjDphi}]: Taking $P_j$ of both sides of (\ref{Dphi}), we have, on frequency support grounds

$$\frac{\fd}{\fd t}P_j D\phi = P_j (E_j (\nabla u \circ \phi) \cdot \widetilde{P_j} D\phi) + P_j (\widetilde{P_j} (\nabla u \circ \phi) \cdot E_j D\phi) + P_j \left(\sum_{k>j} \widetilde{P_k} (\nabla u \circ \phi) \cdot \widetilde{P_k} D\phi\right).$$

We will make frequent use of the following versions of the cheap Littlewood-Paley inequality:

$$\sup_j \dnorm{P_j f}_{L^\infty} \lesssim \dnorm{f}_{L^\infty}\tx{and} \sup_j \dnorm{E_j f}_{L^\infty} \lesssim \dnorm{f}_{L^\infty}.$$

To prove this, observe that, for example,

$$\norm{E_j f(x)} = \norm{\int f(x+2^{-j}s)\widehat{\psi}(s)ds} \leq \dnorm{f}_{L^\infty} \norm{ \int\widehat{\psi}(s)ds} \lesssim \dnorm{f}_{L^\infty} $$

by the definition of $\psi$. Let $S(t) = \sup_j \dnorm{P_j D\phi(t)}_{L^\infty}$. Then we have

$$P_j (E_j (\nabla u \circ \phi) \cdot \widetilde{P_j} D\phi) = O(N)S(t)$$

because we can use the cheap Littlewood-Paley inequality to drop the $P_j$, and use it again together with (\ref{state}) on the $\nabla u$ term to obtain a factor of $O(N)$. By the definition of $\widetilde{P_j}$ and the cheap Littlewood-Paley inequality, we have the factor of $O(1)S(t)$. Along similar lines, using (\ref{conservation}) and the cheap Littlewood-Paley inequality, we have

$$P_j(\widetilde{P_j} (\nabla u \circ \phi) \cdot E_j D\phi) = O(\dnorm{D\phi}_{L^\infty})$$

and finally

$$P_j \left(\sum_{k>j} \widetilde{P_k} (\nabla u \circ \phi) \cdot \widetilde{P_k} D\phi\right) = O(N)S
(t)$$

again by (\ref{state}) and the cheap Littlewood-Paley inequality. Putting this together, we have

\be\frac{d}{dt}S(t) = O(N) S(t) + O( \dnorm{D\phi}_{L^\infty}).\label{Sode}\ee

Using (\ref{Dphi}), (\ref{state}) and Gronwall's Lemma, we see that

\be\label{Dphismall}
\dnorm{D\phi}_{L^\infty} \lesssim e^{Nt}
\ee

and so the the second term of (\ref{Sode}) is $O(1)$ for times $t \lesssim 1/N$ and is dominated by the first term for times $t\gtrsim 1/N$. With $w = S$ and $v(t) = S(0) = O(1)$, we may now apply Lemma \ref{ODE}, and this finishes the proof.

\end{proof}

%Proof of Lemma diff

Recall Lemma \ref{diff}: For times $t \leq C(\log N)/N$ and $\norm{x} \sim 2^{-j}$, we have

$$\norm{E_j D\phi(0,t)\cdot x - (\phi(x,t) - \phi(0,t))} = O( 2^{-j} N^{-\frac{9}{10}}).$$

The proof is achieved in two parts. First, we show that the average of the Jacobian of a flow map is closely approximated by the average difference of a flow map at a fixed scale. That is, for $\norm{x} \sim 2^{-j}$, we have 

$$\norm{E_j D\phi (0,t) \cdot x - \big( E_j \phi (x,t) - E_j D\phi (0,t) \big)} = O(2^{-j}N^{-\frac{9}{10}}).$$

We do this by comparing the time derivatives of each expression and using Lemma \ref{ODE}. Then, we show that the differences of the flow maps themselves at scale $\sim 2^{-j}$ are closely approximated by their averages at the same scale, that is

$$\norm{E_j \phi(x,t) - E_j \phi(0,t) -(\phi(x,t) - \phi(0,t))} \lesssim 2^{-j} N^{-\frac{9}{10}}$$

hence proving Lemma \ref{diff} by the triangle inequality.

\begin{proof}[Proof of Lemma \ref{diff}:]

First, we claim that, for $\norm{x}\sim 2^{-j}$,

\be \label{barrier}
\norm{E_j D\phi (0,t) \cdot x - \big( E_j \phi (x,t) - E_j D\phi (0,t) \big)} = O(2^{-j}N^{-\frac{9}{10}}).
\ee

We examine $\frac{\fd}{\fd t} E_j D\phi (0,t) \cdot x$ using (\ref{Dphi}). The goal is to use Lemma \ref{ODE}. In this case, we want to show that  $\frac{\fd}{\fd t} E_j D\phi (0,t) = E_j \nabla u \circ \phi(0,t) \cdot E_j D\phi (0,t) \cdot x$ plus an error term which obeys acceptable bounds. Taking $\cdot x$ and $E_j$ of both sides of (\ref{Dphi}), we have, purely on frequency support grounds

\be \label{hihi}
\begin{aligned}\frac{\fd}{\fd t} E_j D\phi (0,t) \cdot x &= E_j \Big(E_j (\nabla u \circ \phi) (0,t) \cdot E_j D\phi (0,t) \cdot x\Big)\\
&+ E_j\left( \sum_{l>j} \widetilde{P_l} (\nabla u \circ \phi) (0,t) \cdot \widetilde{P_l} D\phi (0,t)\cdot x\right).
\end{aligned}
\ee

For the first term, since $E_j$ is not actually a projection, we have to separate some of the frequencies. We use the fact that $E_j E_{j-4} = E_{j-4}$, and we have

\begin{align*}E_j \Big(E_j (\nabla u \circ \phi) (0,t) \cdot E_j D\phi (0,t) \cdot x\Big)& = E_{j-4} (\nabla u \circ \phi) (0,t) \cdot E_{j-4} D\phi (0,t)\cdot x\\
& + E_j \left( \sum_{k,l = j-4}^{j-1} P_k (\nabla u \circ \phi) (0,t) \cdot P_l D\phi (0,t)\cdot x \right).
\end{align*}

We now add and subtract $\sum_{k,l = j-4}^{j-1} P_k (\nabla u \circ \phi) (0,t) \cdot P_l D\phi (0,t)\cdot x $. This gives us

\begin{align*} \frac{\fd}{\fd t} &E_j D\phi (0,t) \cdot x = E_j \nabla u \circ \phi(0,t) \cdot E_j D\phi (0,t) \cdot x\\
&+  \sum_{k,l = j-4}^{j-1} E_j \Big(P_k (\nabla u \circ \phi) (0,t) \cdot P_l D\phi (0,t)\cdot x \Big) -  P_k (\nabla u \circ \phi) (0,t) \cdot P_l D\phi (0,t)\cdot x
\end{align*}

where we think of the sum as being an error term. There are only $O(1)$ terms in the sum. We can drop the $E_j$ using the cheap Littlewood-Paley inequality, and hence we only need to bound a typical term in the sum, i.e. $P_k (\nabla u \circ \phi) (0,t) \cdot P_l D\phi (0,t)\cdot x$. On this, we use (\ref{conservation}) on the $\nabla u$ term and Lemma \ref{PjDphi} on the $D\phi$ term, and hence the sum is bounded by $O(2^{-j})$ because of the factor of $\cdot x$, and we have achieved the goal

\be \label{dtDphi} \frac{\fd}{\fd t} E_j D\phi (0,t) \cdot x = E_j (\nabla u \circ \phi)(0,t) \cdot E_j D\phi (0,t) \cdot x + O(2^{-j}). \ee

To use Lemma \ref{ODE}, we now need the analogous statement for $\frac{\fd}{\fd t} \big( E_j \phi (x,t) - E_j D\phi (0,t) \big) $. We begin by using (\ref{phi}). Since $\frac{\fd}{\fd t}$ commutes with $E_j$, and by (\ref{phi}) and the Fundamental Theorem of Calculus, we have

\begin{align*}\frac{\fd}{\fd t} \big( E_j \phi (x,t) - E_j \phi (0,t) \big) &= E_j \frac{\fd}{\fd t} \big( \phi(x,t) - \phi(0,t) \big)\\
&= E_j \big( u(\phi(x,t),t) - u(\phi(0,t),t) \big) \\
&= E_j\left[ \left( \int_0^1 \nabla u (s\phi(x,t)+(1-s)\phi(0,t),t)ds \right) \cdot (\phi(x,t) - \phi(0,t)) \right].
\end{align*}

We now take $E_j$ of the product, move $E_j$ inside the integral and the above expression gives

\be
\begin{array}{c} \label{hihi2} \ds{E_j\left(\left[ \int_0^1 E_j \nabla u (s\phi(x,t)+(1-s)\phi(0,t),t)ds\right]\cdot \big( E_j\phi(x,t) - E_j\phi(0,t) \big) \right)}\\
\ds{+ E_j \left(\sum_{l>j} \widetilde{P_l} (\nabla u \circ \phi) \cdot \widetilde{P_l}\big(\phi(x,t) - \phi(0,t)\big) \right) }
\end{array}
\ee

which we justify on frequency support grounds. We use the same technique on the first term as we used to achieve (\ref{dtDphi}). That is, we add and subtract

$$\sum_{k,l = j-4}^{j-1} \left[ \int_0^1 P_k \nabla u (s\phi(x,t)+(1-s)\phi(0,t),t)ds\right] \cdot (P_l\phi(x,t) - P_l \phi(0,t)) $$

to exploit the fact that $E_jE_{j-4} = E_{j-4}$. This gives us

\begin{align} (\ref{hihi2})& = \left[ \int_0^1 E_j \nabla u (s\phi(x,t)+(1-s)\phi(0,t),t)ds\right]\cdot \big( E_j\phi(x,t) - E_j\phi(0,t) \big) \label{good}\\
&+ \sum_{k,l = j-4}^{j-1} E_j \left( \left[ \int_0^1 P_k \nabla u (s\phi(x,t)+(1-s)\phi(0,t),t)ds\right] \cdot (P_l\phi(x,t) - P_l \phi(0,t)) \right) \label{bad1}\\
&- \sum_{k,l = j-4}^{j-1} \left[ \int_0^1 P_k \nabla u (s\phi(x,t)+(1-s)\phi(0,t),t)ds\right] \cdot (P_l\phi(x,t) - P_l \phi(0,t))\label{bad2}\\
&+ E_j \left(\sum_{l>j} \widetilde{P_l} (\nabla u \circ \phi) \cdot \widetilde{P_l}\big(\phi(x,t) - \phi(0,t)\big)\right) \label{bad3}
\end{align}

The term (\ref{good}) is good and the terms (\ref{bad1}), (\ref{bad2}) and (\ref{bad3}) are error terms. Note that (\ref{phi}), (\ref{state}) and Gronwall's Lemma give us

\be \label{phismall} \norm{\phi(x,t) - \phi(0,t)} \lesssim \norm{x}\log N \ee

for times $t\leq C (\log N)/N$. We can use the cheap Littlewood-Paley inequality to drop the $E_j$ in (\ref{bad1}) and so we may combine it conveniently with (\ref{bad2}). Since there are only $O(1)$ terms in the sum, we only need to bound one of them. We use (\ref{conservation}) on the integrand, and the cheap Littlewood-Paley inequality together with (\ref{phismall}) on the difference $P_l\phi(x,t) - P_l \phi(0,t)$. This gives us 

$$(\ref{bad1}) + (\ref{bad2}) = O(2^{-j} \log N).$$

For (\ref{bad3}), we can use the cheap Littlewood-Paley inequality and (\ref{conservation}) to drop the factor of $\widetilde{P_l} (\nabla u \circ \phi)$. We then use (\ref{phismall}) and the cheap Littlewood-Paley inequality and we have

$$(\ref{bad3}) = O(2^{-j} \log N).$$

Putting the last two equations together gives

\be(\ref{hihi2}) = \left[ \int_0^1 E_j \nabla u (s\phi(x,t)+(1-s)\phi(0,t),t)ds\right]\cdot \big( E_j\phi(x,t) - E_j\phi(0,t) \big) + O(2^{-j} \log N).\label{dtdiff}\ee

At this point, we reiterate that the goal is to have the above expression equal to $E_j (\nabla u \circ \phi)(0,t) \big( E_j\phi(x,t) - E_j\phi(0,t) \big)$ plus an acceptable error term, and that the error so far, $O(2^{-j} \log N)$, is acceptable. For convenience, we adopt the following notation for the integral term in (\ref{dtdiff}):

\begin{align*} \cI (t) &:= \int_0^1 E_j \nabla u (s\phi(x,t)+(1-s)\phi(0,t),t)ds \\
& = \int_0^1 \Big(E_k \nabla u (s\phi(x,t)+(1-s)\phi(0,t),t) + \sum_{l=k}^{j-1} P_l \nabla u (s\phi(x,t)+(1-s)\phi(0,t),t)\Big) ds
\end{align*}

where we chose $k = j -\log N$ so that the first part of the integral is essentially constant. Indeed, if $\dnorm{f}_{L^\infty} \lesssim N$ and $\norm{x-y} \leq 2^{-j}$, with this choice of $k$ we have

\begin{align*} E_k f(x) - E_k f(y) &= \int_{\R^2} f(s) 2^{2k} \Big(\widehat{\psi}(2^k(x+s)) - \widehat{\psi}(2^k(y+s))\Big)ds\\
&\lesssim 2^{2k} \dnorm{f}_{L^\infty} \dnorm{\nabla \widehat{\psi}}_{L^\infty}2^k \norm{x-y} \norm{B_{2^{-j}}(0)}\\
& \lesssim \dnorm{f}_{L^\infty} 2^{k-j}\\
& \lesssim N 2^{-\log N}\\
& \lesssim 1
\end{align*}

wherein we can move from the first line to the second line by the definition of $\psi$, more specifically the fact that $\psi \leq 1$ and is supported on $B_2(0)$. Since the first part of integrand is essentially constant, we can choose any point in the domain we want for its argument (we choose $\phi(0,t)$). We then add and subtract the extra frequencies (that is, those between $k$ and $j$), and we have

$$\cI (t) = E_j \nabla u (\phi(0,t),t) + \int_0^1 \Big(\sum_{l=k}^j P_l \nabla u (s\phi(x,t)+(1-s)\phi(0,t),t) - P_l \nabla u (\phi(0,t),t)\Big) ds  +O(1).$$

The integral of the sum is trivially $\lesssim \log N$ because of (\ref{conservation}) and the choice of $k$. Substituting this into (\ref{dtdiff}) and, again, using the fact that $\norm{E_j\phi(x,t) - E_j\phi(0,t)} = O(2^{-j}\log N)$, we have (finally)

$$\frac{\fd}{\fd t} \big( E_j \phi (x,t) - E_j \phi (0,t) \big)  = E_j \nabla u (\phi(0,t),t) \cdot \big( E_j \phi (x,t) - E_j \phi (0,t) \big) + O(2^{-j}(\log N)^2).$$

Using this, together with (\ref{dtDphi}), we can apply Lemma \ref{ODE} with $w = E_jD\phi(0,t)\cdot x$, $v = E_j\phi(x,t) - E_j\phi(0,t)$ and $E= 2^{-j} (\log N)^2$, which proves the claim that

$$\norm{E_j D\phi(0,t)\cdot x - (\phi(x,t) - \phi(0,t))} = O( 2^{-j} N^{-\frac{9}{10}})$$

for $\norm{x}\sim 2^{-j}.$\\

Now, to prove the Lemma, it suffices to show that 

$$\norm{E_j \phi(x,t) - E_j \phi(0,t) -(\phi(x,t) - \phi(0,t))} \lesssim 2^{-j} N^{-\frac{9}{10}}.$$

By the definition of the Littlewood-Paley Operators, we have

$$E_j \phi(x,t) - E_j \phi(0,t) -(\phi(x,t) - \phi(0,t))= \sum_{k\geq j} P_k \phi(x,t) - P_k \phi(0,t)$$

which we now estimate in two parts. First, for the large frequencies, we have (for arbitrary $y$)

\be\begin{split}\sum_{k = j+\log N}^\infty P_k \phi(y,t) &= \sum_{k = j+\log N}^\infty E_{k+1} \phi(y,t) - E_k\phi(y,t)\\
&= \sum_{k = j+\log N}^\infty \int_{\R^2} \big(\phi(y+2^{-(k+1)}s,t) - \phi(y+2^{-k}s,t)\widehat{\psi}(s)ds\\
&\lesssim \dnorm{D\phi}_{L^\infty} \sum_{k = j+\log N}^\infty 2^{-k}\\
&\lesssim N^\frac{1}{10} 2^{-j-\log N}\\
& \lesssim 2^{-j} N^{-\frac{9}{10}} 
\end{split}\label{largefreq}\ee

where we have used the definition of $E_k$, (\ref{Dphismall}) and our choice of $C$ (as in the proof of Lemma \ref{ODE}). For the smaller frequencies, we have left

\be \sum_{k = j}^{l} P_k \phi(x,t) - P_k \phi(0,t) \label{sum}\ee

where $l = j+\log N -1 $. We will estimate an arbitrary frequency band $P_k \phi(x,t) - P_k \phi(0,t)$ in this rage. Take $x_i$ to be points on the line segment from $0$ to $x$ such that $\norm{x_{i+1}-x_i}\sim 2^{-l}$, thus we have $\sim 2^{l-j} \sim N$ points $x_i$. For convenience of notation, take $x_0=0$ and $x_N=x$. By adding and subtracting $P_k \phi(x_i,t)$ for each $i$, we have

\be \norm{P_k \phi(x,t) - P_k \phi(0,t)} \lesssim 2^{l-j} \max_i \norm{P_k \phi(x_{i+1},t) - P_k \phi(x_i,t)}.\label{max}\ee

For each $i$, we have from Lemma \ref{PjDphi}

$$P_k \big(\phi(x_{i+1},t) - \phi(x_i,t)\big) \lesssim 2^{-l} \dnorm{P_k D\phi}_{L^\infty} \lesssim 2^{-l} N^{-\frac{9}{10}-\frac{1}{100}} $$

Plugging this into (\ref{max}), and, in turn plugging the result into (\ref{sum}), we can use the factor of $N^{-\frac{1}{100}}$ and the fact that there are only $\sim \log N$ terms in the sum to obtain

$$\sum_{k = j}^{l} P_k \phi(x,t) - P_k \phi(0,t) \lesssim 2^{-j}N^{-\frac{9}{10}}.$$

This, together with (\ref{largefreq}), proves the claim that

$$\norm{E_j \phi(x,t) - E_j \phi(0,t) -(\phi(x,t) - \phi(0,t))} \lesssim 2^{-j} N^{-\frac{9}{10}} $$

and we have already shown that 

$$\norm{E_j D\phi(0,t)\cdot x - (\phi(x,t) - \phi(0,t))} = O( 2^{-j} N^{-\frac{9}{10}})$$

and applying the triangle inequality we complete the proof of Lemma \ref{diff}.

\end{proof}

%Proof of Theorem main
  
\begin{proof}[Proof of Theorem \ref{main}:]

Our goal is to show that

\be \label{bigode} \frac{d(E_j D\phi - h_j)}{dt} = \left( \sum_{k<j} (\nabla u)_{k,E_k D\phi,i} \right) E_j D\phi - \left( \sum_{k<j} (\nabla u)_{k,h_k,i} \right) h_j + O(N^\frac{1}{5})E_j D\phi \ee

and apply a version of Lemma \ref{ODE}. (We will denote $E_jD\phi(0,t)$ by $E_jD\phi$ in order to simplify notation.) We first have to estimate

$$\norm{E_j \widetilde{\nabla u} - \sum_{k<j} (\nabla u)_{k,E_k D\phi}}$$

where, by dropping the index $i$, we mean a generic entry of the matrix. Also, we are free to use $\widetilde{\nabla u}$ in stead of $\nabla u$, since the two differ by only $O((\log N)^2)$ and the error term in (\ref{bigode}) will dominate this. We need to estimate the above difference because because it is the error term between 

$$\left( \sum_{k<j} (\nabla u)_{k,E_k D\phi} \right) E_j D\phi $$

and (\ref{dtDphi}), the ODE that $E_jD\phi$ actually obeys (if one omits the $\cdot x$ from (\ref{dtDphi}).) We are, indeed, ignoring the error terms from (\ref{dtDphi}), but they are controlled by the error term in (\ref{bigode}). Hence, we have to estimate

\be E_j \left( \sum_{k \in \Z} \int_{A_k} \omega_{0,k}(s) K(\phi(s,t)-\phi(0,t))ds \right) - \sum_{k<j} \int_{A_k} \omega_{0,k}(s)K(E_k D\phi(0,t) \cdot s)ds \label{err}\ee

We split the sum on the left into two parts: $k \geq j$ and $k < j$. For $k \geq j$, the sum is equal to

\be\begin{split}\label{k>j}E_j \left( \sum_{k\geq j} \int_{A_k} \omega_{0,k}(s) K(\phi(s,t)-\phi(0,t))ds \right) &\lesssim  \sum_{k\geq j} \dnorm{K}_{L^\infty(\phi(A_k,t))} \int_{A_k} E_j \omega_{0,k}(s) ds \\
& \lesssim (\log N)^2.  \end{split}
\ee

One factor of $\log N$ comes from integrating $K$, and the other comes from (\ref{conservation}) and the fact that $E_j \omega_{0,k} = 0$ for $k>j+\log N +2$. To control the rest of the error (\ref{err}), where the first sum is over $k<j$, we have

\be \label{k<j}\sum_{k<j} \int_{A_k} \omega_{0,k}(s)\big(K(\phi(s,t)-\phi(0,t))-K(E_kD\phi(0,t)\cdot s)\big)ds.\ee

By Lemma \ref{diff}, we have $\norm{\phi(s,t)-\phi(0,t) - E_k D\phi(0,t) \cdot s} \lesssim 2^{-k}N^{-\frac{9}{10}}$ when $\norm{s} \sim 2^{-k}$. Further, by (\ref{Dphismall}) and (\ref{phismall}), we may choose $C$ so that if $x = \phi(s,t)-\phi(0,t) $, $y= E_k D\phi (0,t) \cdot s$ and $\eps = \frac{1}{50} - \frac{1}{500}$, we have

$$2^{-k}N^{-\eps} \lesssim \norm{x},\norm{y} \lesssim 2^{-k}N^{\eps}$$

for times $t\lesssim C(\log N)/N$. Then we have the bound

\begin{align*}K_{12}(x)-K_{12}(y) & \lesssim 2^{4k}N^{4\eps}(x_1x_2 - y_1y_2)\\
&= 2^{4k}N^{4\eps}(x_1(x_2-y_2)+y_2(x_1-y_1))\\
&\lesssim N^{5\eps}2^{3k} \max_i \{\norm{x_i-y_i}\} \\
&\lesssim 2^{2k}N^{5\eps-\frac{9}{10}}\\
&\lesssim 2^{2k}N^{-\frac{4}{5}-\frac{1}{100}}
\end{align*}

and similarly for $K_{11}$. We can then estimate the sum (\ref{k<j}) by

$$N^{-\frac{4}{5}-\frac{1}{100}} \sum_{k<j} \dnorm{\omega_{0,k}}_{L^\infty}ds \lesssim N^{\frac{1}{5}-\frac{1}{100}} \log N \lesssim N^\frac{1}{5}$$

and with this we have the estimate (\ref{bigode}).\\

We will now apply a version of Lemma \ref{ODE}. Assume for contradiction that the estimate $\norm{h_k - E_kD\phi} = O(N^{-\frac{7}{10}})$ fails for the first time at time $t_0 < C(\log N)/N$ and at scale $j$. So, for $k<j$, the estimate holds. Therefore we have

\begin{align*} \frac{d(E_jD\phi - h_j)}{dt} &= \left( \sum_{k<j} (\nabla u)_{k,E_kD\phi} \right)E_kD\phi - \left( \sum_{k<j} (\nabla u)_{k,h_k} \right) h_j + O(N^\frac{1}{5})E_jD\phi \\
&= \left( \sum_{k<j} (\nabla u)_{k,h_k} \right) (E_kD\phi - h_j) + \left( \sum_{k<j} (\nabla u)_{k,E_kD\phi} - (\nabla u)_{k,h_k} \right) E_jD\phi + O(N^\frac{1}{5})E_jD\phi\\
& \lesssim \left( \sum_{k<j} (\nabla u)_{k,h_k} \right) (E_jD\phi - h_j) + O(N^\frac{1}{5})E_jD\phi
\end{align*}

where, for the last line, we used our assumption on small scales $k<j$ and the estimates on the Biot-Savart kernels $K_{1i}$. Note that, at time $t=0$, the difference $E_jD\phi - h_j = 0$. Suppose that $T$ is the first time such that $E_jD\phi-h_j = N^{-\frac{4}{5}}$. If $t\leq \min\{\frac{1}{N},T\}$, we have

$$\frac{d(E_jD\phi-h_j)}{dt} \lesssim N^\frac{1}{5} \tx{since} N^\frac{1}{5} E_jD\phi = O(1)$$

and it follows that $T = O(\frac{1}{N})$. For times $t$ such that $T \leq t \leq t_0 < C(\log N)/N$, the first term dominates and

$$E_jD\phi - h_j = O(N^{-\frac{4}{5}} \exp \big(tO(N))\big) = O(N^{-\frac{7}{10}})$$

where the last equality comes from our choice of $C$, since $t_0 < C(\log N)/N \leq (\log N^\frac{1}{10})/N$. Thus, the assumption that the estimate breaks down at scale $j$ and at time $t_0 < C(\log N)/N$ was false, and hence it holds for all $j$ and $t\leq C(\log N)/N$, proving the claim.

\end{proof}

\bigskip
\tiny

\textsc{N. H. KATZ, DEPARTMENT OF MATHEMATICS, CALIFORNIA INSTITUTE OF TECHNOLOGY, PASADENA, CA}

{\it nets@caltech.edu}

\bigskip

\textsc{A. TAPAY, DEPARTMENT OF MATHEMATICS, INDIANA UNIVERSITY, BLOOMINGTON, IN}

{\it atapay@indiana.edu}

\end{document}